\theoremstyle{plain}
\newtheorem{Thm}{Theorem}
\newtheorem{Lem}{Lemma}
\theoremstyle{definition}
\newtheorem*{Ack}{Acknowledgment}
\theoremstyle{remark}
\def\Z{\mathbb Z}
\def\N{\mathbb N}
\def\c{{\bf c}}
\def\cP{\mathcal P}
\def\cA{\mathcal{A}}
\def\cB{\mathcal{B}}
\def\E{\mathbb{E}}
\def\P{\mathbb{P}}
\def\1{{\bf 1}}
\def\pmod #1{\ ({\rm mod}\ #1)}
\def\floor #1{\lfloor{#1}\rfloor}
\begin{document}
\title{The additive complements of primes and Goldbach's problem}
\author{Li-Xia Dai}
\email{lilidainjnu@163.com}
\address{School of Mathematical Sciences, Nanjing Normal University, Nanjing  210046, People's Republic of China}
\author{Hao Pan}
\email{haopan79@yahoo.com.cn}
\address{Department of Mathematics, Nanjing University,
Nanjing 210093, People's Republic of China}
%\keywords{}
\subjclass[2010]{Primary
11P32; Secondary 05D40, 11B13, 11N36}
\thanks{The first author is
supported by National Natural Science Foundation of China (Grant
No. 10801075) and the Natural Science Foundation of Jiangsu Higher
Education Institutions of China (Grant No.08KJB11007). The second
author is supported by National Natural Science Foundation of
China (Grant No. 10901078).}

 \maketitle

\section{Introduction}
\setcounter{Lem}{0}\setcounter{Thm}{0}\setcounter{Cor}{0}
\setcounter{equation}{0}

First, we introduce the notion of additive complements. For a set $A\subseteq\N=\{0,1,2,3,\ldots\}$, we say a set
$B\subseteq\N$ is an additive complement of $A$, provided that for
every sufficiently large $n\in\N$, there exist $a\in A$ and $b\in B$ such that
$n=a+b$, i.e., the sumset
$$
A+B=\{a+b:\,a\in A,\ b\in B\}
$$
contains all sufficiently large integers.

Furthermore, for the sets $A,B\subseteq\N$, if the sumset
$A+B$ has lower density 1, i.e., almost all positive integers $n$ can be represented as $n=a+b$ with $a\in A$ and $n\in B$,
then we say $B$ is an almost additive complement of $A$.

The additive properties of primes are always one of the most fascinating topics in number theory. Let $\cP$ denote the set of all primes. It is natural to ask what the additive complements of $\cP$ are. By the prime number theorem, we know that
$$
\cP(x)\sim\frac{x}{\log x},
$$
where $A(x)=|A\cap[1,x]|$ for a set $A\subseteq\N$. So if $A$ is an additive complement of $\cP$, we must have
$A(x)\gg\log x$. Unfortunately, no one knows whether there exists an additive complement $A$ of $\cP$
satisfying $A(x)=O(\log x)$. However,
in \cite{Erdos54}, using the probability method, Erd\H
os showed that such additive complement exists, provided that we replace $\log x$ by $(\log x)^2$. That is,

\medskip{\it there exists a set $A\subseteq\N$ with $A(x)=O((\log x)^2)$ such that the sumset $A+\cP$ contains every sufficiently large integer.}\medskip

In 1998, Ruzsa \cite{Ruzsa98} improved the results of Wolke
\cite{Wolke96} and Kolountzakis\cite{Kolountzakis96}, and showed
that for every function $w(x)$ with
$\lim_{x\to+\infty}w(x)=+\infty$, there exists an almost additive
complement $A$ of $\cP$ with $A(x)=O(w(x)\log x)$, i.e.,

\medskip{\it there exists a set $A\subseteq\N$ with $A(x)=O(w(x)\log x)$ such that almost all positive integers can be represented as the sums of one element of $A$ and a prime.}\medskip

 In 2001, Vu proved that $\cP$ has an additive complement $A$ of order 2 with $A(x)=O(\log x)$, i.e.,

 \medskip{\it there exists a set $A\subseteq\N$ with $A(x)=O(\log x)$ such that every sufficiently large integer $n$ can be represented as $n=a_1+a_2+p$, where $a_1,a_2\in A$ and $p$ is a prime.}\medskip

Clearly Vu's result implies Erd\H os' result, since $(A+A)(x)\ll (A(x))^2$.

Next, let us turn to the Goldbach problem. As early as 1937, using the circle method, Vinogradov \cite{Vinogradov37} had solved the ternary Goldbach problem and showed that

\medskip{\it every sufficiently large odd integer can be represented as the sum of three primes.}\medskip

And subsequently, with a similar discussion, Estermann \cite{Estermann38} proved the binary Goldbach problem is true for almost all positive even integers, i.e.,

\medskip{\it almost all positive even integers can be represented as the sums of two primes.}\medskip

In this note, we shall combine the results of Ruzsa and Vu with Goldbach's problem.
\begin{Thm}\label{t1}
There exists a set $\cA\subseteq\cP$ with $\cA(x)=O(\log x)$ such that every sufficiently large odd integer can be represented as $a_1+a_2+p$ where $a_1,a_2\in\cA$ and $p\in\cP$.
\end{Thm}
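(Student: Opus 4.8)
The plan is to follow the probabilistic method of Erd\H os, as refined by Ruzsa and Vu, but carried out \emph{inside} the primes and driven by Vinogradov's three-prime theorem rather than by the bare primality heuristic; the restriction to odd $n$ is precisely what makes the latter available (a sum of three odd primes is odd, and only odd targets are handled unconditionally by Vinogradov). I would construct $\cA$ at random: retain each prime $q$ in $\cA$ independently with probability $\rho(q)=\min\{1,\,C(\log q)/q\}$, where $C$ is a large absolute constant to be fixed later. By Mertens' theorem $\sum_{q\le x}(\log q)/q=\log x+O(1)$, so $\E[\cA(x)]=\sum_{q\le x}\rho(q)\sim C\log x$; a Chernoff bound applied along $x=2^k$ together with the monotonicity of $\cA(x)$ then shows, via Borel--Cantelli, that $\cA(x)=O(\log x)$ almost surely. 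This disposes of the size requirement.

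For the covering requirement, fix a large odd $n$ and let
$$
N(n)=\#\{(q,r):\ q,r\in\cA,\ n-q-r\in\cP\}
$$
count the ordered representations, so that $\E[N(n)]=\sum_{q+r+p=n}\rho(q)\rho(r)$ with the sum over primes $q,r,p$. The first substantive step is to prove $\E[N(n)]\gg C^2\log n$. Here I would feed the weights $(\log q)/q$ into the asymptotic form of Vinogradov's theorem (or a Selberg-sieve bound of the same strength): these weights act like the logarithmic measure, giving the single logarithm, while the singular series $\mathfrak S(n)$ is \emph{bounded below} for odd $n$ --- exactly the input that fails for even $n$ and that forces the hypothesis in Theorem~\ref{t1}.

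The main obstacle is to upgrade this mean bound to $\P(N(n)=0)=o(n^{-1})$, that is, to control the dependence among the events $A_{q,r}=\{q\in\cA\}\cap\{r\in\cA\}$ over representing pairs. I would apply Janson's inequality, which requires bounding $\Delta=\sum_{A_{q,r}\sim A_{q',r'}}\P(A_{q,r}\cap A_{q',r'})$, the contribution of pairs of representations sharing a prime. The difficulty is genuine: a direct estimate gives $\mu=\E[N(n)]\asymp C^2\log n$ but $\Delta\asymp C^3\log n$, since a shared variable contributes three factors $\rho$ rather than two, so $\Delta/\mu\asymp C$ and one \emph{cannot} simply enlarge $C$. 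The remedy I would pursue is to count only an almost-disjoint subfamily of representations --- for instance restricting $q,r$ to a sparse union of well-separated dyadic blocks, or retaining at most one representation per admissible small prime $q$ --- chosen so that $\Delta=o(\mu)$ while still $\mu\gg\log n$. Janson's inequality then yields $\P(N(n)=0)\le\exp(-\mu+\Delta/2)\le n^{-1-\varepsilon}$.

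A second Borel--Cantelli argument then shows that almost surely only finitely many odd $n$ fail to have a representation $n=q+r+p$ with $q,r\in\cA$ and $p\in\cP$. Intersecting with the almost-sure event $\cA(x)=O(\log x)$ leaves an event of probability $1$ on which both conclusions hold, and any sample point in it provides a set $\cA\subseteq\cP$ with $\cA(x)=O(\log x)$ such that every sufficiently large odd integer is of the form $a_1+a_2+p$ with $a_1,a_2\in\cA$ and $p\in\cP$. This is exactly Theorem~\ref{t1}; I expect the delicate point throughout to be the dependency estimate of the third paragraph, with the mean computation of the second a routine but careful sieve/circle-method exercise.
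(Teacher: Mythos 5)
Your overall skeleton (a random $\cA$ with $\P(q\in\cA)\asymp C\log q/q$, a mean estimate, Janson's inequality, Borel--Cantelli) is the same as the paper's, but your third paragraph --- which you yourself single out as the delicate point --- takes a wrong turn rather than closing the argument. The claim that ``one cannot simply enlarge $C$'' is an artifact of using the weak form of Janson's inequality $\P(N(n)=0)\le\exp(-\mu+\Delta/2)$. The paper instead uses Janson's large-deviation inequality: if $Y=\sum_\alpha I_\alpha$ and $\mu=\E(Y)$, then
$$
\P\big(Y\le(1-\varepsilon)\mu\big)\le\exp\bigg(-\frac{(\varepsilon\mu)^2}{2(\mu+\Delta)}\bigg).
$$
With $\mu\asymp C^2\log n$ and $\Delta\asymp C^3\log n$ the exponent is $\asymp\mu^2/\Delta\asymp C\log n$, which grows linearly in $C$; choosing $C$ large makes it $\ge 2\log n$, the failure probabilities are summable, and the proof closes with no sparsification at all. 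Moreover, your first proposed remedy provably fails: if each prime appears in at most one retained representation, the family is disjoint, and writing $\rho_{q_m}\asymp C/m$ for the $m$-th prime, a rearrangement argument gives $\sum\rho_q\rho_r\ll C^2$ over any disjoint family --- a bound independent of $n$ --- so $\mu\gg\log n$ is unattainable; if instead the $r$'s may repeat, the dependencies you were trying to remove reappear through the shared $r$'s. The dyadic-block variant can in principle be tuned (use only a $C^{-2}$-fraction of the $\asymp(\log n)^2$ pairs of scales, spread evenly, so that $\mu'\asymp\log n$ while $\Delta'/\mu'\asymp 1/C$), but you have not carried out that computation, and note that it, too, ultimately works only by taking $C$ large.

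There is a second, quieter gap in your mean estimate. The bound $\E[N(n)]\gg C^2\log n$ is not a routine consequence of Vinogradov's theorem: under the weights $\rho_q\rho_r\asymp C^2\log q\log r/(qr)$, the balanced triples $q,r,p\asymp n$ that dominate Vinogradov's count contribute only $O(C^2/\log n)$. The factor $\log n$ comes from summing $\asymp(\log n)^2$ pairs of dyadic scales, each contributing $\asymp C^2/\log n$, i.e.\ from highly unbalanced triples in which $q$ and $r$ are far smaller than $n$. Lower-bounding those counts amounts to showing that for almost all even $m=n-p$ lying in a short interval, the equation $m=q+r$ has the expected number of prime solutions with $q$ confined to a prescribed interval; this restricted-range, almost-all binary Goldbach theorem is exactly the paper's key lemma, proved by Jia's method (and needed for general offsets $y$, not only the symmetric case $y=x/2$). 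A Selberg sieve cannot substitute for it, since sieves give only upper bounds. So the step you describe as ``a routine but careful sieve/circle-method exercise'' is precisely where the paper's one deep arithmetic input lives; the probabilistic machinery surrounding it is, as you say, standard.
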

\begin{Thm}\label{t2}
For every function $w(x)$ with $\lim_{x\to+\infty}w(x)=+\infty$, there exists a set $\cB\subseteq\cP$ with $\cB(x)=O(w(x)\log x)$ such that almost all even positive integers can be represented as $b+p$ where $b\in\cB$ and $p\in\cP$.
\end{Thm}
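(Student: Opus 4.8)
The plan is to prove Theorem~\ref{t2} by the probabilistic method of Erd\H os and Ruzsa, with the Goldbach structure supplied by an Estermann-type variance estimate. First I would reduce to a convenient $w$: since enlarging $w$ only weakens the conclusion, I may assume without loss of generality that $w$ is smooth, nondecreasing, and grows so slowly that $w(x)\log x=o(x/\log x)$; in particular $w(x)=o(x/(\log x)^2)$. I then choose each odd prime $p$ to lie in $\cB$ independently with probability
$$
\sigma_p=\min\Big(1,\ \frac{c\,w(p)\log p}{p}\Big),
$$
for a fixed constant $c>0$. A routine Mertens-type computation gives $\E[\cB(x)]=\sum_{p\le x}\sigma_p\asymp c\int_2^x\frac{w(t)}{t}\,dt\asymp c\,w(x)\log x$, which is the desired order of magnitude.

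For the covering property, fix an even $n$ and set
$$
\Sigma(n)=\sum_{\substack{p<n\\ p,\,n-p\in\cP}}\sigma_p .
$$
If $\cB$ meets the set of Goldbach partners $\{p\in\cP:\,n-p\in\cP\}$ of $n$, then $n$ is representable, so the probability that $n$ is \emph{not} of the form $b+p$ is $\prod_p(1-\sigma_p)\le e^{-\Sigma(n)}$, the product being over those partners. Hence, writing $E(x)$ for the number of even $n\le x$ admitting no such representation, linearity of expectation gives $\E[E(x)]\le\sum_{n\le x}e^{-\Sigma(n)}$. Thus everything reduces to showing that $\Sigma(n)\to\infty$ for almost all even $n$.

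The heart of the matter --- and the step I expect to be the main obstacle --- is proving this lower bound for $\Sigma(n)$. On average the Hardy--Littlewood heuristic predicts
$$
\Sigma(n)\approx \mathfrak{S}(n)\int_2^{n}\frac{\sigma(t)}{\log t\,\log(n-t)}\,dt\gg \mathfrak{S}(n)\,w(n),
$$
where $\mathfrak{S}(n)$ is the Goldbach singular series, bounded below by a positive constant for even $n$; since $w(n)\to\infty$ this would suffice. To turn the heuristic into a statement valid for almost all $n$, I would run a second-moment argument over a dyadic block $n\in(x/2,x]$, where $\mathfrak{S}(n)\asymp 1$ makes the predicted main term $\mu(x)\asymp w(x)$ essentially uniform. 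The first moment $\sum_n\Sigma(n)=\sum_{p\le x}\sigma_p\,\#\{n\in(x/2,x]:\,n-p\in\cP\}$ is evaluated by the prime number theorem, while the second moment unfolds into $\sum_{p_1,p_2}\sigma_{p_1}\sigma_{p_2}\,\#\{n\in(x/2,x]:\,n-p_1,\,n-p_2\in\cP\}$, i.e.\ a weighted count of prime pairs with prescribed difference. Bounding the off-diagonal terms by their Hardy--Littlewood main terms and controlling the accumulated error is precisely the (weighted) exceptional-set estimate underlying Estermann's theorem, and must be established by the circle method: the major arcs reproduce the predicted mean, and the minor-arc contribution --- estimated through $\int_0^1\big|\sum_p\sigma_p e(p\theta)\big|^2\,\big|\sum_{q\le x}e(q\theta)\big|^2\,d\theta$ together with large-sieve and van der Corput type bounds, where $e(\theta)=e^{2\pi i\theta}$ --- is shown to be $o\big(x\,w(x)^2\big)$. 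Chebyshev's inequality then yields $\Sigma(n)\ge\tfrac12\mu(x)\gg w(x)$ for all but $o(x)$ of the even $n\in(x/2,x]$; summing over dyadic blocks, $\Sigma(n)\to\infty$ outside a set of even integers of density zero.

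With this in hand the proof concludes in the standard way. The exceptional-set bound gives $\E[E(x)]=o(x)$, while the size estimate together with the independence of the choices gives concentration of $\cB(x)$ about its mean. Evaluating both quantities along a geometric sequence $x_k$, applying Markov's inequality and the Borel--Cantelli lemma, and using the monotonicity of $E(x)$ and $\cB(x)$ to fill in the intermediate values of $x$, I obtain that with probability one the random set $\cB$ simultaneously satisfies $\cB(x)=O(w(x)\log x)$ and $E(x)=o(x)$. In particular such a set $\cB\subseteq\cP$ exists, which completes the proof.
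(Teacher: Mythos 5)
Your probabilistic skeleton (random $\cB\subseteq\cP$ with $\P(p\in\cB)=\sigma_p$, the bound $\P(n\neq b+p\ \forall b,p)\le e^{-\Sigma(n)}$, first moment for $\cB(x)$, Markov plus Borel--Cantelli along a geometric sequence) is sound and is indeed the same general framework as the paper's. The gap is exactly at the step you flag as the heart of the matter, and it is twofold. First, the Chebyshev step as you state it is false: heuristically $\Sigma(n)\approx c\,\mathfrak{S}(n)\cdot(\text{smooth factor in }n)$, and the singular series $\mathfrak{S}(n)$ fluctuates by constant factors across even $n$ (its variance over a dyadic block is comparable to the square of its mean), so the variance of $\Sigma(n)$ is $\asymp x\,w(x)^2$, not $o(x\,w(x)^2)$; Chebyshev about the \emph{uniform} mean $\mu(x)$ therefore only shows that a positive proportion of even $n$, not almost all, satisfy $\Sigma(n)\ge\mu(x)/2$. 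One must center the second moment at the $n$-dependent main term $\mathfrak{S}(n)\cdot(\text{singular integral})$. Second, and more seriously, even that corrected variance estimate is not provable for your weights. Since $\sigma_p\asymp w(p)\log p/p$, each dyadic scale $P$ contributes $\asymp \mathfrak{S}(n)w(P)/\log n$ times $\log n$ partners-per-scale weight, i.e. roughly equal mass $\asymp w(P)$ per scale; hence scales $P\le n^{\delta}$ carry a fixed positive proportion (about $\delta$) of the expected value of $\Sigma(n)$. Controlling their fluctuation for almost all $n$ means proving that almost all even $n$ have the expected number of Goldbach partners in $[P,2P]$ for dyadic scales far below $n^{1/2}$. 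This is an open problem: the only available input of this type is precisely the paper's Lemma~\ref{jia} (Jia), which requires the window length to be at least $n^{1-c_0}$ for a specific constant $c_0$. Your minor-arc sketch cannot bridge this: the sum $f(\theta)=\sum_p\sigma_p e(p\theta)$ has its $\ell^1$ mass spread logarithmically over all scales, and for $\theta$ near a rational with denominator $q$ the blocks at scales $P\le q$ admit no provable cancellation, so $\sup_{\text{minor arcs}}|f|$ cannot be pushed below its trivial size $\asymp w(x)\log x$; the alternative bound $\sup|g|^2\int_0^1|f|^2\ll x^2(\log x)^{-2A}$ is likewise far larger than the required $o(x\,w(x)^2)$.

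The paper is built to avoid exactly this obstruction. In Lemma~\ref{sum} the random set lives in a \emph{single} interval $[N^{c_0},2N^{c_0}]$, so the only Goldbach input needed is Jia's short-interval exceptional-set estimate at that one admissible scale, and the covering is claimed only for $N^{c_1}\le x\le N$; a first-moment argument then suffices (no variance at all). Lemma~\ref{density} iterates this over geometrically growing $N_i$, and the proof of Theorem~\ref{t2} diagonalizes over $\epsilon_i=1/(i+1)$, using the hypothesis $w(x)\to\infty$ only to absorb the growing constants $K(\epsilon_i)$ when gluing the pieces $B_i\cap[N_{i-1},N_{i+1}]$. Your construction could be repaired in the same spirit --- for instance, in the lower bound for $\Sigma(n)$ discard all partners $p\le n^{1-c_0}$, since the remaining scales still contribute $\gg w(n)$ and lie in the range where Lemma~\ref{jia} applies --- but then the essential input is Jia's theorem (or a comparable deep short-interval result), not a circle-method estimate one can establish from scratch with large-sieve and van der Corput bounds, and the "almost all scales" version you actually invoke is beyond current technology.
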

Note that Theorems \ref{t1} and \ref{t2} imply Vu's and Ruzsa's results respectively. If we set $A=\cA\cup(\cA+\{1\})$ and $B=\cB\cup(\cB+\{1\})$, then clearly the sumset $A+A+\cP$ contains all sufficiently large integer and $B+\cP$ has lower density $1$.

The proofs of Theorems \ref{t1} and \ref{t2} will be given in the next sections.

\section{Proof of the Theorem \ref{t1}}
\setcounter{equation}{0} \setcounter{Thm}{0} \setcounter{Lem}{0}
\setcounter{Cor}{0}

The key of our proofs is the following lemma.
\begin{Lem}\label{jia} There exists a positive constant $c_0$ such that
if $x$ is sufficiently large, $x^{1-c_0}\leq M\leq x$ and $0\leq y\leq
x-M$, then for all even integers $n$ with $x\leq n\leq x+M$,
except for $O(M(\log x)^{-2})$ exceptional values, we always have
\begin{equation}\label{jiae}
\sum_{\substack{ n=p_1+p_2\\ y\leq p_1 \leq y+M\\x-y-M\leq p_2\leq
x-y+M }}1\gg C(n)\frac{M}{(\log x)^2}
\end{equation}
where
\begin{equation}\label{cn}
C(n)=\prod_{\substack{p\nmid n }}\bigg(1-\frac{1}{(p-1)^2}\bigg)\prod_{\substack{p\mid n }}\bigg(1+\frac{1}{p-1}\bigg)
\end{equation}
and the implied constant in (\ref{jiae}) only depends on $c_0$.
\end{Lem}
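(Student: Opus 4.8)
The plan is to attack (\ref{jiae}) by the circle method, treating it as a binary Goldbach problem in which one summand is confined to a short interval, and to extract the ``almost all'' statement from a second-moment (variance) estimate. Writing $e(t)=e^{2\pi i t}$ and using the von Mangoldt weight, I first replace the prime count in (\ref{jiae}) by
$$
R(n)=\sum_{y\le m\le y+M}\Lambda(m)\Lambda(n-m),
$$
and observe that for $n\in[x,x+M]$ and $m\in[y,y+M]$ the complementary variable $n-m$ automatically lies in $[x-y-M,x-y+M]$, so the two range restrictions in (\ref{jiae}) are equivalent to the single condition $m\in[y,y+M]$. Since proper prime powers contribute $O(\sqrt{x}(\log x)^2)$ to $R(n)$ and $\Lambda(p)\le\log(x+M)$, a lower bound $R(n)\gg C(n)M$ yields (\ref{jiae}) after dividing by $(\log x)^2$. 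Setting
$$
S(\alpha)=\sum_{y\le m\le y+M}\Lambda(m)e(m\alpha),\qquad U(\alpha)=\sum_{x-y-M\le m\le x-y+M}\Lambda(m)e(m\alpha),
$$
I have $R(n)=\int_0^1 S(\alpha)U(\alpha)e(-n\alpha)\,d\alpha$ for every $n\in[x,x+M]$, and I dissect $[0,1)$ into major arcs $\mathfrak M$ centred at fractions $a/q$ with $q\le(\log x)^B$ and the complementary minor arcs $\mathfrak m$.

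On the major arcs I would evaluate $S(a/q)$ and $U(a/q)$ by the prime number theorem in arithmetic progressions in short intervals. Here the hypothesis $x^{1-c_0}\le M$ with $c_0$ small is essential: for $c_0<5/12$ one has $M\ge x^{7/12+\varepsilon}$, so Huxley's short-interval theorem combined with the Siegel--Walfisz bound gives $\sum_{\substack{y\le m\le y+M\\ m\equiv a\,(q)}}\Lambda(m)=M/\phi(q)+O(M(\log x)^{-2B})$ uniformly for $q\le(\log x)^B$ and $(a,q)=1$. Summing these contributions over $\mathfrak M$ produces the singular series, giving
$$
\int_{\mathfrak M}S(\alpha)U(\alpha)e(-n\alpha)\,d\alpha=C(n)M+O\big(M(\log x)^{-B}\big),
$$
with $C(n)$ as in (\ref{cn}). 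For even $n$ the factor at $p=2$ equals $2$ and $\prod_{p\ge3}(1-(p-1)^{-2})>0$, so $C(n)\gg1$; hence the major-arc term is $\gg M$ for every even $n\in[x,x+M]$ once $x$ is large. (For odd $n$ the factor $1-(p-1)^{-2}$ at $p=2$ vanishes, consistent with $C(n)=0$ and with the absence of representations.)

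The heart of the matter is the minor-arc contribution $R_{\mathfrak m}(n)=\int_{\mathfrak m}S(\alpha)U(\alpha)e(-n\alpha)\,d\alpha$, which I would control in mean square rather than pointwise. By Parseval's identity applied to $S(\alpha)U(\alpha)\mathbf 1_{\mathfrak m}(\alpha)$, whose $n$-th Fourier coefficient is exactly $R_{\mathfrak m}(n)$,
$$
\sum_{x\le n\le x+M}|R_{\mathfrak m}(n)|^2\le\sum_{n\in\Z}|R_{\mathfrak m}(n)|^2=\int_{\mathfrak m}|S(\alpha)|^2|U(\alpha)|^2\,d\alpha\le\Big(\sup_{\alpha\in\mathfrak m}|S(\alpha)|\Big)^2\int_0^1|U(\alpha)|^2\,d\alpha,
$$
and $\int_0^1|U(\alpha)|^2\,d\alpha=\sum_m\Lambda(m)^2\ll M\log x$. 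The main obstacle, and the only place where serious input beyond the major arcs is needed, is the minor-arc estimate $\sup_{\alpha\in\mathfrak m}|S(\alpha)|\ll M(\log x)^{-A}$ for arbitrarily large $A$. Because $M$ is a fixed power of $x$, this should follow from Vaughan's identity together with the standard Type I/Type II bilinear bounds for exponential sums over primes in short intervals, the condition $M\ge x^{1-c_0}$ giving enough room to run the Vinogradov argument; one then fixes $B$ in terms of $A$ to cover the major-arc moduli. Granting this, the displayed variance is $\ll M^3(\log x)^{1-2A}$, so Chebyshev's inequality shows that the number of even $n\in[x,x+M]$ with $|R_{\mathfrak m}(n)|\ge\delta M$ is $\ll\delta^{-2}M(\log x)^{1-2A}$, which is $O(M(\log x)^{-2})$ once $A\ge2$. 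For every even $n$ outside this exceptional set, combining the two arcs gives $R(n)\ge C(n)M-\delta M\gg C(n)M$ after taking $\delta$ smaller than the implied constant in the major-arc lower bound; dividing by $(\log x)^2$ then establishes (\ref{jiae}).
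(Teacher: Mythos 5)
First, note that the paper does not actually prove Lemma \ref{jia}: its ``proof'' is a one-line citation asserting that the lemma follows by the method of Jia \cite{Jia94}, who treated the case $y=x/2$. So your proposal has to be measured against Jia's method rather than against a self-contained argument in the paper. Your overall architecture --- circle method, reduction to $R(n)$ with the second range condition absorbed automatically, and the exceptional set extracted from a Parseval/variance bound on the minor-arc contribution followed by Chebyshev --- is the standard framework for almost-all results of this shape, and that bookkeeping in your write-up is correct.

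The genuine gap is the sentence in which you dismiss the minor-arc estimate $\sup_{\alpha\in\mathfrak m}|S(\alpha)|\ll M(\log x)^{-A}$ as something that ``should follow from Vaughan's identity together with the standard Type I/Type II bilinear bounds.'' For a sum over primes in a short interval $[y,y+M]$ with $M=x^{1-c_0}$, this estimate is the central difficulty of the entire subject, not a routine deduction: Vaughan's identity in a short interval produces bilinear sums whose variables are confined to a hyperbolic region of measure $\asymp M$, and the standard Cauchy--Schwarz treatment loses more than the short interval can absorb. The known pointwise minor-arc bounds of this quality (Zhan, for $M\ge x^{5/8+\varepsilon}$) require Heath-Brown's identity combined with mean-value theorems for Dirichlet polynomials, i.e. zero-density technology; and at the exponent $1-c_0=7/12$, which is the strength the paper attributes to Jia, no such pointwise bound is available at all --- Jia has to combine the circle method with a sieve, which is precisely why Lemma \ref{jia} is stated as a lower bound $\gg C(n)M/(\log x)^2$ rather than as the asymptotic formula your argument would deliver if it worked. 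A smaller instance of the same problem occurs on your major arcs: Huxley's theorem ``combined with'' Siegel--Walfisz does not give the prime number theorem for $m\equiv a\pmod{q}$, $m\in[y,y+M]$, uniformly in $q\le(\log x)^B$; one needs genuine hybrid short-interval results for Dirichlet $L$-functions (again zero-density estimates, with an ineffective constant because of possible exceptional zeros). Since the lemma only asserts the existence of \emph{some} $c_0>0$, your proof could in principle be completed by invoking these theorems explicitly as black boxes, but as written the proposal hides exactly the deep input that the paper outsources to \cite{Jia94}.
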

\begin{proof} This lemma can be proved by the method of Jia in \cite{Jia94}, although he only discussed the case $y=x/2$.
In fact, Jia proved that Lemma \ref{jia} holds whenever $c_0>7/12$.
\end{proof}
Now suppose that $n$ is a sufficiently integer. For each $x\in\cP$, we choose $x$ to be in $\cA$ with the probability $$
\varrho_x=\frac{\c\log x}x,$$
where $\c>0$ is a constant to be chosen
later. Then using the law of large numbers, one can easily show that almost surely $\cA(n)=O(\log n)$ for every $n$, since
$$\sum_{\substack{p\leq n\\ p\text{ is prime}}}\frac{\log p}{p}\sim\log n.
$$
Let $t_x$ be the binary random
variable representing the choice of $x$, i.e., $t_x=1$ with
probability $\varrho_x$ and 0 with probability $1-\varrho_x$. Consider
$$Y_n=\sum_{\substack{p<n\\ p\ \text{is prime}}}\sum_{\substack{i+j=n-p\\ i,j\ \text{are
prime}}}t_it_j.$$
We need to prove that there exists $n_0>0$ satisfying the probability
$$\P(Y_n>0\text{ for every }n\geq n_0)\geq 1/2.$$ Choose $0<\epsilon<c_0/2$ and let $M=n^{1-2\epsilon}$, where $c_0$ is the one appearing in Lemma \ref{jia}. In the remainder of this section, the implied constants of $O(\cdot)$, $\ll$ and $\gg$ will only depend on $\epsilon$. Let
$$Y_n^*=\sum_{\substack{p\leq n-n^{1-\epsilon}\\ p\ \text{is prime}}}\sum_{\substack{i+j=n-p\\ i,j \geq M \\i,j\text{ are prime}}}t_it_j.$$
Clearly $\P(Y_n>0)\geq\P(Y_n^*>0)$ for every $n$. The following lemma is due to Janson \cite{Janson90}.
\begin{Lem}\label{janson}
Let $z_1,\cdots,z_m $ be independent indicator random variables
and $Y=\Sigma_\alpha I_\alpha$ where each $I_\alpha$ is the
product of few $z_j's.$ Define $\alpha\sim\beta$ if there is some
$z_j$ which is in both $I_\alpha$ and $I_\beta$. Let
$\Delta=\sum_{\alpha\sim\beta}\E(I_\alpha I_\beta)$. For any $Y$
and any positive number $\varepsilon$ we have
$$ \P(Y\leq
(1-\varepsilon)\E(Y))\leq \exp\bigg(-\frac{(\varepsilon
\E(Y))^2}{2(\E(Y)+\Delta )}\bigg).
$$
\end{Lem}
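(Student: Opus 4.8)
The statement is a lower-tail large-deviation bound, so the natural route is the exponential-moment (Chernoff--Markov) method, which reduces everything to estimating the Laplace transform $M(t)=\E(e^{-tY})$ for $t>0$. First I would record that, for every $t>0$, Markov's inequality applied to the nonnegative variable $e^{-tY}$ gives
$$
\P\big(Y\leq(1-\varepsilon)\E(Y)\big)=\P\big(e^{-tY}\geq e^{-t(1-\varepsilon)\E(Y)}\big)\leq e^{t(1-\varepsilon)\E(Y)}\,M(t).
$$
Hence the whole problem is to produce a good upper bound for $M(t)$ and then to choose $t$ well.

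The core of the argument, and the step I expect to be hardest, is the estimate
$$
M(t)\leq\exp\Big(-(1-e^{-t})\,\E(Y)+\tfrac12(1-e^{-t})^2\,\Delta\Big),
$$
the analogue for dependent $I_\alpha$ of the trivial identity $\E\prod_\alpha e^{-tI_\alpha}=\prod_\alpha\big(1-(1-e^{-t})\E I_\alpha\big)$ that holds when the $I_\alpha$ are independent. I would prove this through a differential inequality for $M$. Differentiating $M$ and using that each $I_\alpha$ takes only the values $0,1$, so that $I_\alpha e^{-tI_\alpha}=e^{-t}I_\alpha$, one gets $-M'(t)=e^{-t}\sum_\alpha\E\big(I_\alpha e^{-t(Y-I_\alpha)}\big)$. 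For a fixed $\alpha$ I would split $Y-I_\alpha$ into the neighbour part $W_\alpha=\sum_{\beta\sim\alpha,\,\beta\neq\alpha}I_\beta$, made of the $I_\beta$ sharing a variable $z_j$ with $I_\alpha$, and the remaining part, which is a function of variables disjoint from those in $I_\alpha$ and is therefore independent of $I_\alpha$. Two elementary facts then drive the estimate: the independence just noted lets me replace the disjoint part by its unconditional mean, and the inequality $e^{-tw}\geq 1-(1-e^{-t})w$ for integers $w\geq0$ lets me linearize $e^{-tW_\alpha}$, so that the cost of the dependence is carried entirely by the pairwise terms $\E(I_\alpha I_\beta)$ with $\beta\sim\alpha$. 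Summing over $\alpha$, these pairwise terms assemble into $\Delta$, while the Harris--FKG correlation inequality (available because $I_\alpha$ is increasing and $e^{-t(Y-I_\alpha)}$ decreasing in the $z_j$) is what keeps the comparison with $M(t)$ pointing the right way. Integrating the resulting differential inequality from $0$ to $t$ produces the displayed bound on $M(t)$.

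With the two displays in hand, I would finish by optimizing. Setting $u=1-e^{-t}$ the exponent in the combined bound is $t(1-\varepsilon)\E(Y)-u\,\E(Y)+\tfrac12u^2\Delta$; using $u\geq t-t^2/2$ and taking $t$ near $\varepsilon\,\E(Y)/(\E(Y)+\Delta)$ collapses this, after a short calculation, to an exponent no larger than $-(\varepsilon\E(Y))^2/\big(2(\E(Y)+\Delta)\big)$, which is precisely the asserted inequality. (For $\varepsilon\geq1$ the left-hand side is an empty event since $Y\geq0$, and there is nothing to prove.)

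The genuine obstacle is the middle paragraph: controlling the dependence among the $I_\alpha$ so that it inflates the variance proxy only by the additive amount $\Delta$. The delicacy is that the neighbours and non-neighbours of a given $\alpha$ are themselves correlated, so a crude independence splitting is not available; it is the combination of the monotonicity (Harris--FKG) with the linearization $e^{-tw}\geq1-(1-e^{-t})w$ that makes the neighbour contribution telescope cleanly into $\Delta$. Once that single estimate is secured, both the Chernoff reduction and the final optimization are routine.
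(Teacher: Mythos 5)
The paper does not prove this lemma at all: it is stated as a known result, with the proof deferred to Janson's paper \cite{Janson90}. So there is no in-paper argument to compare against; what you have written is essentially Janson's own (now standard) proof of the lower-tail inequality: the exponential-moment reduction, the key bound $\E(e^{-tY})\leq\exp\big(-(1-e^{-t})\E(Y)+\tfrac12(1-e^{-t})^2\Delta\big)$ obtained from a differential inequality for $M(t)=\E(e^{-tY})$, and the optimization in $t$. Your intermediate bound is exactly the correct one, and your final optimization (use $1-e^{-t}\geq t-t^2/2$ and $(1-e^{-t})^2\leq t^2$, then take $t=\varepsilon\E(Y)/(\E(Y)+\Delta)$) does yield the stated exponent.

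One step, however, is misstated and would fail as written: the correlation inequality. Harris--FKG applied to the increasing function $I_\alpha$ and the decreasing function $e^{-t(Y-I_\alpha)}$ gives \emph{negative} correlation, $\E\big(I_\alpha e^{-t(Y-I_\alpha)}\big)\leq\E(I_\alpha)\,\E\big(e^{-t(Y-I_\alpha)}\big)$, i.e., an upper bound on $-M'(t)$ --- the wrong direction, since you need a \emph{lower} bound on $-M'(t)/M(t)$ to get an upper bound on $M(t)$. The correct step is: write $\E\big(I_\alpha e^{-t(Y-I_\alpha)}\big)=\E(I_\alpha)\,\E\big(e^{-tW_\alpha}e^{-tZ_\alpha}\,\big|\,I_\alpha=1\big)$, where $Z_\alpha=Y-I_\alpha-W_\alpha$ is the non-neighbour part; observe that conditioning on $I_\alpha=1$ merely fixes the variables occurring in $I_\alpha$ to equal $1$, so the conditional measure is still a product measure; apply Harris--FKG under that measure to the \emph{two decreasing} functions $e^{-tW_\alpha}$ and $e^{-tZ_\alpha}$ (which are positively correlated); then use independence to replace $\E(e^{-tZ_\alpha}\mid I_\alpha=1)$ by $\E(e^{-tZ_\alpha})$, and the pointwise bound $Z_\alpha\leq Y$ to get $\E(e^{-tZ_\alpha})\geq M(t)$. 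Combined with your Bernoulli linearization $\E(e^{-tW_\alpha}\mid I_\alpha=1)\geq 1-(1-e^{-t})\E(W_\alpha\mid I_\alpha=1)$, summing over $\alpha$ gives $-(\log M)'(t)\geq e^{-t}\E(Y)-e^{-t}(1-e^{-t})\sum_{\alpha\neq\beta,\,\alpha\sim\beta}\E(I_\alpha I_\beta)$, which after integration is your claimed bound (with a $\Delta$ that is at most the lemma's, so the conclusion only improves). Two minor points: at $\varepsilon=1$ the event $\{Y\leq 0\}=\{Y=0\}$ is not empty, but no case distinction is needed since the argument above is valid for every $\varepsilon>0$; and note the lemma's $\Delta$ counts ordered pairs including $\alpha=\beta$, which again only weakens the target inequality.
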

Let
$$
X=\{(i,j):\,i,j\geq M,\ i+j\geq n^{1-\epsilon}\text{ and
}i,j,n-i-j\text{ are all prime}\}.$$ Clearly,
$$
Y_n^*=\sum_{(i,j)\in X}t_it_j=\sum_{\alpha\in X}I_\alpha,
$$
where $I_\alpha=t_it_j$ for $\alpha=(i,j)$. In view of Lemma \ref{janson}
we only need to estimate
$$
\E(Y_n^*)=\E\bigg(\sum_{\alpha \in
X}I_\alpha\bigg)
\text{\qquad and\qquad}
\Delta=\E\bigg(\sum_{\substack{\alpha,\beta\in X\\
\alpha\sim\beta}}I_\alpha I_{\beta}\bigg).
$$
By Lemma \ref{jia}, we have
\begin{align*}
\E\bigg(\sum_{\alpha \in X}I_\alpha\bigg)=&\E\bigg(\sum_{\substack{p\leq
n-n^{1-\epsilon}\\ p\text{ is prime}}}
\sum_{\substack{i+j=n-p\\ i,j\geq M\\ i,j\text{ are prime}} }t_it_j\bigg)\geq\E\bigg(\sum_{\substack{0\leq t\leq n^{1-\epsilon}/M \\
tM\leq n-n^{1-\epsilon}-p<(t+1)M\\ p\text{ is
prime}}}\sum_{\substack{ 1\leq s\leq
(n-p)/M-2\\ i+j=n-p\\ sM\leq i<(s+1)M\\ i,j\text{ are prime}}}t_it_j\bigg)\\
\geq&\E\bigg(\sum_{\substack{0\leq t\leq n^{1-\epsilon}/M\\ 1\leq s\leq n^{1-\epsilon}/M+t-2}}\sum_{\substack{tM\leq n-n^{1-\epsilon}-p<(t+1)M\\
 p\text{ is prime}}}\sum_{\substack{i+j=n-p\\ sM\leq i<(s+1)M\\ i,j\text{ are prime}}}t_it_j\bigg)\\
\gg&\sum_{\substack{0\leq t\leq n^{1-\epsilon}/M\\ 1\leq s\leq
n^{1-\epsilon}/M+t-2}} \frac{M}{\log M}\cdot\frac{M}{(\log
M)^2}\cdot\frac{\c^2(\log
M)^2}{sM\cdot(n^{1-\epsilon}+(t+1)M-sM)}.
\end{align*}
Clearly,
\begin{align*}
&\sum_{\substack{0\leq t\leq n^{1-\epsilon}/M\\ 1\leq s\leq
n^{1-\epsilon}/M+t-2}}
\frac{1}{sM\cdot(n^{1-\epsilon}+(t+1)M-sM)}\\
\gg&\int_{1}^{n^{1-\epsilon}/M}\bigg(\int_{1}^{n^{1-\epsilon}/M+t-2}\frac{1}{sM(n^{1-\epsilon}+tM-sM)}d s\bigg)d t\\
\gg&\int_{1}^{n^{1-\epsilon}/M}\frac{1}{n^{1-\epsilon}+tM}\bigg(\int_{1}^{n^{1-\epsilon}/M+t-2}
\bigg(\frac{1}{sM}+\frac1{n^{1-\epsilon}+tM-sM}\bigg)d s\bigg)d t\\
\gg&\int_{1}^{n^{1-\epsilon}/M}\frac{\log(n^{1-\epsilon}+tM)}{M(n^{1-\epsilon}+tM)}d t\gg \frac{(\log(2n^{1-\epsilon}))^2}{M^2}\gg \frac{(\log n)^2}{M^2}.\\
\end{align*}
Thus we get that
$$\E(Y_n^*)\gg \c^2\log n.$$
Now we turn to $\Delta$.
\begin{align*}
\E\bigg(\sum_{\substack{\alpha,\beta\in
X\\\alpha\sim\beta}}I_\alpha
I_{\beta}\bigg)=&\E\bigg(\sum_{\substack{i\geq M\\ i\text{ is
prime}}}
\sum_{\substack{p_1+j_1=p_2+j_2=n-i\\ p_1,p_2\leq n-n^{1-\epsilon}\\ j_1,j_2\geq M\\ p_1,p_2,j_1,j_2\text{ are prime}} }(t_it_{j_1})(t_it_{j_2})\bigg)\\
\leq&\sum_{\substack{i\geq M\\ i\text{ is prime}}}\frac{\c\log i}{i}\sum_{\substack{p_1+j_1=p_2+j_2=n-i\\
p_1,p_2\leq n-n^{1-\epsilon}\\ j_1,j_2\geq M\\
p_1,p_2,j_1,j_2\text{ are prime}} } \frac{\c\log
j_1}{j_1}\cdot\frac{\c\log j_2}{j_2}\\\leq&\sum_{\substack{M\leq
i\leq n-n^{1-\epsilon}\\ i\text{ is prime}}} \frac{\c\log
n}{i}\bigg(\sum_{\substack{p+j=n-i\\ j\geq M\\ p,j\text{ are
prime}} }\frac{\c\log n}{j}\bigg)^2.
\end{align*}

For a set $U$ of positive integers, by the partial summation, we
have
$$
\sum_{\substack{M\leq j\leq x\\ j\in
U}}\frac{1}j\ll\frac{U(x)}{x}-\frac{U(M)}{M}+\sum_{M<y\leq
x}\frac{U(y)}{y^2}.
$$
Note that by the sieve method, we have
$$
|M\leq j\leq y:\,\text{both }j\text{ and }m-j\text{ are
primes}|\ll\frac{C(m)(y-M)}{(\log(y-M))^2}\ll\frac{C(m)y}{(\log
y)^2}
$$
since $y/(\log y)^2$ is increasing for $y>e$, where $C(\cdot)$ is the one appearing in Lemma \ref{jia}. Hence
\begin{align*}
\sum_{\substack{p+j=n-i\\ j\geq M \\ p,j\text{ are prime}}
}\frac{1}{j}\ll&\frac{1}{(n-i)}\cdot\frac{C(n-i)\cdot(n-i)}{(\log(n-i))^2}+\frac1M+\sum_{M+e<y\leq
n-i}\frac{C(n-i)}{y(\log y)^2}\\\ll&\frac{C(n-i)}{\log
M}\ll\frac{C(n-i)}{\log n}.
\end{align*}
Thus,
$$
\E\bigg(\sum_{\alpha\sim\beta}I_\alpha
I_{\beta}\bigg)\ll \c^3\log n\sum_{\substack{M\leq i\leq n-n^{1-\epsilon}\\
i\text{ is prime}}}\frac{C(n-i)^2}{i}.
$$
Note that
$$
C(n-i)^2\ll\prod_{p\mid
n-i}\bigg(1+\frac{1}{p-1}\bigg)^2\ll\prod_{p\mid
n-i}\bigg(1+\frac{1}{p}\bigg)^2\ll\prod_{p\mid
n-i}\bigg(1+\frac{2}{p}\bigg)=\sum_{\substack{d\mid n-i\\ d\text{
is square-free}}}\frac{2^{\omega(d)}}{d},
$$
where $\omega(d)$ denotes the number of the distinct prime factors
of $d$. Hence,
$$
\sum_{\substack{M\leq i\leq n-n^{1-\epsilon}\\ i\text{ is
prime}}}\frac{C(n-i)^2}{i}\ll\sum_{\substack{d<n-n^{1-\epsilon}-M \\
d\text{ is
square-free}}}\frac{2^{\omega(d)}}{d}\sum_{\substack{M\leq i\leq
n-n^{1-\epsilon}\\ i\text{ is prime}\\ i\equiv
n\pmod{d}}}\frac{1}{i}
$$
By the Brun-Titchmarsh theorem, we know
$$
|\{M\leq i\leq y:\, i\text{ is prime and }i\equiv
n\pmod{d}\}|\ll\frac{y-M}{\phi(d)\log((y-M)/d)},
$$
provided that $y-M\geq 1.1d$. By the partial summation, for $d\leq (n-n^{1-\epsilon}-M)/1.1$,
\begin{align*}
&\sum_{\substack{M\leq i\leq n-n^{1-\epsilon}\\ i\text{ is prime}\\
i\equiv
n\pmod{d}}}\frac{1}{i}\\\ll&\frac{1}{n-n^{1-\epsilon}}\cdot\frac{n}{\phi(d)\log(n/d)}+\frac1M+\sum_{M<
y<M+1.1d}\frac{2}{y^2}+\sum_{\substack{M+1.1d\leq
y\leq
n-n^{1-\epsilon}}}\frac{1}{\phi(d)y\log(y/d)}\\
\ll&\frac{1}{\phi(d)}+\frac1M+\frac{\log\log(n/d)-\log\log(M/d)}{\phi(d)}.
\end{align*}
If $d\leq\sqrt{M}$, then
$$
\log\log(n/d)-\log\log(M/d)\leq\log\log n-\log\log\sqrt{M}\ll 1.
$$
If $\sqrt{M}<d\leq (n-n^{1-\epsilon}-M)/1.1$, then
$$
\frac{\log\log(n/d)-\log\log(M/d)}{\phi(d)}\ll\frac{\log\log
n}{d^{1-\epsilon}}\ll\frac{1}{M^{1/2-\epsilon}}.
$$
Thus we have
\begin{align}
&\sum_{\substack{d\leq(n-n^{1-\epsilon}-M)/1.1\\ d\text{ is
square-free}}}\frac{2^{\omega(d)}}{d}\sum_{\substack{M\leq i\leq n-n^{1-\epsilon}\\ i\text{ is prime}\\
i\equiv
n\pmod{d}}}\frac{1}{i}\ll\sum_{d<n}\frac{2^{\omega(d)}}{d}\bigg(\frac{1}{M^{1/2-\epsilon}}+\frac{1}{\phi(d)}\bigg)\notag\\
\ll&\frac{1}{M^{1/2-\epsilon}}\sum_{d<n}\frac{1}{d^{1-\epsilon}}+\sum_{d<n}\frac{1}{d^{2-\epsilon}}\ll\frac{n^{\epsilon}}{M^{1/2-\epsilon}}+O(1)=O(1).
\end{align}
On the other hand, we have
\begin{align*}
&\sum_{\substack{(n-n^{1-\epsilon}-M)/1.1<d\leq n-n^{1-\epsilon}-M\\ d\text{ is
square-free}}}\frac{2^{\omega(d)}}{d}\sum_{\substack{M\leq i\leq n-n^{1-\epsilon}\\ i\text{ is prime}\\
i\equiv
n\pmod{d}}}\frac{1}{i}\\
\ll&\sum_{\substack{(n-n^{1-\epsilon}-M)/1.1<d\leq n-n^{1-\epsilon}-M\\ d\text{ is
square-free}}}\frac{2^{\omega(d)}}{n}\sum_{\substack{M\leq i\leq n-n^{1-\epsilon}\\ i\text{ is prime}\\
i=n-d}}1\leq\frac1n\sum_{\substack{i<n\\ i\text{ is prime}}}\tau(n-i),\end{align*}
where $\tau$ be the divisor function and $\tau(d)=2^{\omega(d)}$ if $d$ is square-free. Now
\begin{align*}
\sum_{\substack{i<n\\ i\text{ is prime}}}\tau(n-i)=&\sum_{\substack{i<n\\ i\text{ is prime}}}\sum_{k\mid n-i}1\leq 2
\sum_{\substack{i<n\\ i\text{ is prime}}}\sum_{\substack{k\leq\sqrt{n-i}\\ k\mid n-i}}1\\
\leq&\sum_{k\leq\sqrt{n}}\sum_{\substack{i<n\\ i\text{ is prime}\\ i\equiv n\pmod{k}}}1\ll\sum_{k\leq\sqrt{n}}\frac{n}{\phi(k)\log n}.\end{align*}
We know that (cf. \cite{Sitaramachandra85})
$$
\sum_{k\leq x}\frac{1}{\phi(k)}\ll\log x.
$$
It follows that
\begin{equation}
\sum_{\substack{(n-n^{1-\epsilon}-M)/1.1<d\leq n-n^{1-\epsilon}-M\\ d\text{ is
square-free}}}\frac{2^{\omega(d)}}{d}\sum_{\substack{M\leq i\leq n-n^{1-\epsilon}\\ i\text{ is prime}\\
i\equiv
n\pmod{d}}}\frac{1}{i}\ll 1.
\end{equation}
Thus
$$
\Delta\ll \c^3\log n.
$$
Now
$$
\frac{\E(Y_n^*)^2}{\E(Y_n^*)+\Delta}=\frac{\E(Y_n^*)}{1+\frac{\Delta}{\E(Y_n^*)}}\gg\frac{\c^2\log n}{1+\frac{\c^3\log n}{\c^2\log n}}=\frac{\c^2}{1+\c}\log n.
$$
So we may choose sufficiently large $\c$ such that
$$
\frac{\E(Y_n^*)^2}{\E(Y_n^*)+\Delta}\geq 100\log n.
$$
In view of Lemma \ref{janson}, we have
$$
\P(Y_n^*=0)\leq \P(Y_n^*\leq0.5\E(Y_n^*))\leq\exp\bigg(-\frac{-(0.5\E(Y_n^*))^2}{2(\E(Y_n^*)+\Delta)}\bigg)\leq\exp(-2\log n)=\frac1{n^2}.
$$

Thus choosing a sufficiently large $n_0$, we have
$$
\P(Y_n^*=0\text{ for some }n\geq n_0)\leq\sum_{n\geq n_0}\P(Y_n^*=0)\leq\sum_{n\geq n_0}\frac1{n^2}\leq\frac12.
$$
This completes the proof of Theorem 1.1.

\section{Proof of the Theorem \ref{t2}}
\setcounter{equation}{0} \setcounter{Thm}{0} \setcounter{Lem}{0}
\setcounter{Cor}{0}

Let $c_0$ be the constant appearing in Lemma \ref{jia} and $c_1$ be an another fixed constant with $c_0<c_1<1.$
\begin{Lem}\label{sum} For every $0<\epsilon<1$,there is a
$K=K(\epsilon)$ and an $N_0=N_0(\epsilon)$ such that for $N>N_0$
we can always find a set
$$B\subset [N^{c_0}, 2N^{c_0}]$$
of primes such that
\begin{equation}\label{Bg}
|B|\leq K\log N
\end{equation}
and the set $S=\cP+B$ satisfies
\begin{equation}\label{Sg}
S(x)\geq (1-\epsilon)x \text { for all } N^{c_1}\leq x\leq N.
\end{equation}
\end{Lem}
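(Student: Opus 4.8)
The plan is to build $B$ as a random subset of the primes in $[N^{c_0},2N^{c_0}]$ and to verify the size bound and the covering bound by first- and second-moment estimates fed by Lemma \ref{jia}. Include each prime $q\in[N^{c_0},2N^{c_0}]$ in $B$ independently with probability $\varrho=\lambda(\log N)^2/N^{c_0}$, where $\lambda=\lambda(\epsilon)$ is a large constant fixed at the end. By the prime number theorem this interval holds $(1+o(1))N^{c_0}/(c_0\log N)$ primes, so $\E|B|=(1+o(1))\lambda(\log N)/c_0$, and Markov's inequality gives $|B|\le K\log N$ with probability at least $3/4$ once $K=K(\epsilon)$ is large in terms of $\lambda,c_0$. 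Since the members of $B$ are odd, $S=\cP+B$ reaches the even integers through the odd primes $p$ and reaches only $|B|$ odd integers; accordingly I read the inequality $S(x)\ge(1-\epsilon)x$ as the assertion that $S$ omits at most $\epsilon x$ of the even integers below $x$, which is what the proof of Theorem \ref{t2} uses.

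For even $m$ set $r(m)=|\{q\in[N^{c_0},2N^{c_0}]:q\text{ and }m-q\text{ are prime}\}|$, so that $\P(m\notin S)=(1-\varrho)^{r(m)}\le\exp(-\varrho\,r(m))$ by independence. To bound $r(m)$ from below I would invoke Lemma \ref{jia} with $y=M=N^{c_0}$, covering $[2N^{c_0},N]$ by $O(N^{1-c_0})$ intervals of length $M$; its hypotheses $x^{1-c_0}\le M\le x$ and $0\le y\le x-M$ hold on this range because $c_0>1/2$ (in fact $c_0>7/12$) together with $c_0<c_1<1$ give $m^{1-c_0}\le N^{1-c_0}\le N^{c_0}=M\le m$ and $m\ge 2N^{c_0}$. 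Lemma \ref{jia} then furnishes, outside a set of even $m$ of size $o(x)$ in $[1,x]$ (the exceptional integers of the lemma, together with the $O(N^{c_0})$ even integers below $2N^{c_0}$), the bound $r(m)\gg C(m)N^{c_0}(\log N)^{-2}\gg N^{c_0}(\log N)^{-2}$, since for even $m$ one has $C(m)\ge 2\prod_{p\ge3}(1-(p-1)^{-2})>0$. Hence every such good $m$ satisfies $\varrho\,r(m)\gg\lambda$ and so $\P(m\notin S)\le e^{-c\lambda}$ for an absolute $c>0$.

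Summing over $m$, the expected number of omitted even integers below $x$ is at most $o(x)+e^{-c\lambda}x/2$, a small multiple of $x$ once $\lambda$ is large. The difficulty is uniformity: the lemma demands the bound for every $x\in[N^{c_1},N]$ simultaneously, while a bounded $\lambda$ (forced by $|B|\ll\log N$) keeps the pointwise expected deficit at size $\asymp e^{-c\lambda}x$, so first moments spread over the $O(\log N)$ scales cannot beat the extra $\log N$ by themselves; Janson's inequality (Lemma \ref{janson}) bounds a lower tail of a single coverage sum and does not deliver this joint upper-tail control either. I would therefore use a second moment. With $T_X=\sum_{m\le X}\mathbf{1}[m\notin S]$ over even $m$ and
\[
\mathrm{Cov}\big(\mathbf{1}[m\notin S],\mathbf{1}[m'\notin S]\big)=\P(m\notin S)\,\P(m'\notin S)\big(e^{\beta s(m,m')}-1\big),\qquad\beta=-\log(1-\varrho),
\]
where $s(m,m')=|\{q:q,\,m-q,\,m'-q\text{ all prime}\}|$, the variance is governed by pairs sharing common representations. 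Its main term is $\beta\sum_{m,m'\le X}s(m,m')=\beta\sum_q(\#\{m\le X:m-q\text{ prime}\})^2\ll\lambda X^2/\log N$ for $X\ge N^{c_1}$, giving $\mathrm{Var}(T_X)\ll e^{-2c\lambda}\lambda X^2/\log N$; Chebyshev's inequality then yields $\P(T_X>\tfrac{\epsilon}{2}X)\ll e^{-2c\lambda}\lambda\,\epsilon^{-2}(\log N)^{-1}$. A union bound over the $O(\log N)$ dyadic values of $X$ leaves failure probability $\ll e^{-2c\lambda}\lambda\,\epsilon^{-2}$, which tends to $0$ as $\lambda\to\infty$; fixing $\lambda=\lambda(\epsilon)$ large makes it at most $1/4$, and since $T_x\le T_X$ for $x\le X$ this controls all $x$ up to a factor $2$. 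Combining with $|B|\le K\log N$, both events hold together with probability $\ge 1/2$, so a suitable $B$ exists.

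The step I expect to be the real obstacle is the variance estimate, and inside it the pairs $(m,m')$ with large overlap $s(m,m')$: for these $e^{\beta s}-1$ is far larger than $\beta s$, so the linear main term is not the whole story, and one must show through Brun--Titchmarsh type bounds on the higher correlations $\sum_{m,m'\le X}s(m,m')^j$ that such pairs are too sparse to spoil the bound. Everything else — the application of Lemma \ref{jia}, the exact computation of $\P(m\notin S)$, and the Chernoff/Markov control of $|B|$ — is routine.
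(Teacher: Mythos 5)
Your construction and first\mbox{-}moment analysis coincide with the paper's: the same Bernoulli model on the primes of $[N^{c_0},2N^{c_0}]$ (the paper takes inclusion probability $K\log N/(2L)$ with $L=|\cP\cap[N^{c_0},2N^{c_0}]|$, which is your $\varrho$ up to constants), the same application of Lemma \ref{jia} to get $\P(m\notin S)\le e^{-c\lambda}$ outside an exceptional set of size $o(x)$, and your even\mbox{-}integer reading of (\ref{Sg}) is indeed the only tenable one (as literally stated the lemma fails for $\epsilon<1/2$ by parity; the paper tacitly makes the same restriction --- its bound $C(n)>0.6$ is used only where it holds, namely for even $n$, since $C(n)=0$ for odd $n$). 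The genuine divergence is at the uniformity\mbox{-}in\mbox{-}$x$ step, where the paper is slicker and never touches correlations between different integers. Its observation is that the events $\{S(x_j)>(1-\epsilon/2)x_j\}$, with $x_j=N/\eta^j$ and $\eta=1+\epsilon/2$, are \emph{increasing} events in the independent indicators $t_q$, so by Harris's inequality (FKG --- the paper asserts the product bound without naming it) the probability that all $O(\log N)$ of them hold simultaneously is at least the product of the individual probabilities, each of which is $\ge\eta^{-1}$ by Markov applied to $\E(T(x_j))$. This yields only $\P(\text{all scales good})\gg N^{-(1-c_1)}$, polynomially small --- but that suffices, because the exponential\mbox{-}moment bound gives $\P(|B|>K\log N)<N^{-1}$, which is strictly smaller, so the two desired events must intersect. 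This ``compare two rare events'' trick is why the paper needs a Chernoff\mbox{-}type bound on $|B|$ (your Markov bound with success probability $3/4$ would not do there), and it is what lets the paper get by on first moments alone.

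Your route also works, and the obstacle you flag closes more easily than you fear: no higher moments $\sum_{m,m'}s(m,m')^j$ are needed, because there are simply \emph{no} pairs with large overlap. For distinct even $m,m'\le N$, a standard upper\mbox{-}bound sieve (Selberg or Brun) applied to the three linear forms $q$, $m-q$, $m'-q$ with $q\in[N^{c_0},2N^{c_0}]$ gives
$$
s(m,m')\ll\frac{N^{c_0}}{(\log N)^{3}}\prod_{p\mid mm'(m-m')}\Big(1+\frac{O(1)}{p}\Big)\ll\frac{N^{c_0}(\log\log N)^{O(1)}}{(\log N)^{3}},
$$
so $\beta s(m,m')\ll\lambda(\log\log N)^{O(1)}/\log N=o(1)$ uniformly, and hence $e^{\beta s(m,m')}-1\ll\beta s(m,m')$ for every off\mbox{-}diagonal pair. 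Your linear ``main term'' is then the whole variance, up to the diagonal $\sum_m\P(m\notin S)\ll e^{-c\lambda}X+o(X)$, which is dominated on the range $X\ge N^{c_1}$; one minor bookkeeping point is to restrict $T_X$ to the non\mbox{-}exceptional $m$ (counting the exceptional ones deterministically as $o(X)$), so that the factor $e^{-2c\lambda}$ is not lost on pairs containing an exceptional member. With these repairs your Chebyshev\mbox{-}plus\mbox{-}union\mbox{-}bound computation is correct and the lemma follows. The trade\mbox{-}off: your argument costs one extra sieve input (a three\mbox{-}dimensional upper bound of the same standard type as the two\mbox{-}dimensional one the paper uses in Section 2) in exchange for a conceptually routine concentration argument, whereas the paper's argument needs nothing beyond Lemma \ref{jia} but hinges on the less obvious positive\mbox{-}correlation step, which it does not even justify explicitly.
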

\begin{proof}
Assume that $N$ is sufficiently large. Suppose that $C^*$ is the
implied constant in (\ref{jiae}). Let
$$K=\max\big\{\frac{10}{c_0C^*}\log\frac{16}{\epsilon^2},20\big\},\quad
M=N^{c_0},\quad I=[M, 2M]\cap \cP,\quad
L=|I|=(1+o(1))\frac{M}{\log M}.$$
Let $B$ be a random subset of
$I$ such that each $x\in I$ is included into $B$ independently
with a probability
$$\varrho=\frac{K\log N}{2L}.$$ And let $t_x$ be the binary random
variable representing the choice of $x$. Clearly
$$
\E(|B|)=\E\bigg(\sum_{x\in I}t_x\bigg)=\varrho L=\frac{K}2\log N.
$$
Now it suffices to show that
\begin{equation}\P((\ref{Sg})\ \text{holds})>\P(|B|>K\log N).
\end{equation}

Apparently,
\begin{align*}
\E(e^{|B|})=&\E\bigg(\exp\bigg(\sum_{x\in I}t_x\bigg)\bigg)=\prod_{x\in I}\E(e^{t_x})=((1-\varrho)\cdot e^0+\varrho\cdot e^1)^L\\
=&(1+\varrho(e-1))^L\leq\exp(\varrho(e-1)L)<\exp\bigg(\frac{K(e-1)}{1.9}\log N\bigg).\end{align*}
Thus from Markov's inequality we get
\begin{align*}
&\P(|B|>K\log N)=\P(e^{|B|}>e^{K\log N})\leq\frac{\E(e^{|B|})}{e^{K\log N}}\\
\leq &\exp\bigg(\frac{K(e-1)}{1.9}\log N-K\log N\bigg)=N^{\frac{K(e-2.9)}{1.9}}<\frac1N.
\end{align*}

Below we only need to show that $\P((\ref{Sg})\ \text{holds})\geq N^{-1}$. Let $\eta=1+\epsilon/2$ and let
$x_j=N/\eta^j$ for $0\leq j\leq (1-c_1)\log N/\log \eta+1$. Clearly, for $x_{j}\leq x\leq x_{j-1}$,
$S(x_j)>(1-\epsilon/2)x_j$ implies that $S(x)>(1-\epsilon)x$. So it suffices to show that
$$
\P(S(x_j)>(1-\epsilon/2)x_j\text{ for all }j)>\frac1N.
$$

Let $$T(x)=x-S(x).$$ Clearly $S(x_j)>(1-\epsilon/2)x_j$ is
equivalent to $T(x_j)<(\epsilon/2)x_j$.

Let
$$
z(n)=\sum_{\substack{ n=p_1+p_2\\ p_1\in I,\ p_2\in\cP}}1
$$
and
$$
Q_j=\big\{M\leq n\leq x_j:\, z(n)<\frac{C^*C(n)M}{3\log^2 n}\big\}
$$
By Lemma \ref{jia}, we have
$$
|Q_j|\leq\sum_{1\leq k\leq x_j/M}|Q_j\cap[kM,(k+1)M]|=\sum_{1\leq
k\leq
x_j/M}O\bigg(\frac{M}{(\log(kM))^2}\bigg)=O\bigg(\frac{x_j}{(\log
M)^2}\bigg).
$$
For a given $n\subseteq[2M,x_j]\setminus Q_j$, we have $n\not\in
S$ only if none of those $z(n)$ primes
$$
\{p_1\in I:\,n-p_1\text{ is prime}\}
$$
is in $B$. The probability of this event is
$$\P(n\not\in S)=(1-\varrho)^{z(n)}\leq\exp(-\varrho z(n)).$$

We have
$$\varrho z(n)\geq \frac{K\log N}{2L}\cdot C^*C(n)\frac{M}{3\log^2 n}\geq\frac{C^*C(n)KM}{6L\log N}\geq\frac{C^*c_0K}{10},$$
since
$$
C(n)\geq \prod_{p\geq 3\text{ is prime}}(1-\frac{1}{(p-1)^2})>0.6.
$$
Hence
\begin{equation}
\P(n\not\in S)\leq \exp(-0.1c_0C^*K).
\end{equation}
Thus the expectation of $T(x_j)$
satisfies
\begin{equation}
\E(T(x_j))\leq\exp(-0.1c_0C^*K)x_j+2M+O(x_j(\log
M)^{-2})\leq2\exp(-0.1c_0C^*K)x_j,
\end{equation}
by noting that $x_j\geq N^{c_1}$. From Markov's inequality, we can infer
\begin{equation}
\P(T(x_j)\geq (\epsilon/2)x_j)\leq
\frac{\E(T(x_j))}{(\epsilon/2)x_j}<\frac{4}{\epsilon}\exp(-0.1c_0C^*K)\leq
1-\frac1\eta,
\end{equation}
since
\begin{equation}
K\geq \frac{10}{c_0C^*}\log\frac{4}{\epsilon(1-\eta^{-1})}.
\end{equation}
So letting $J=\floor{(1-c_1)\log N/\log\eta+1}$, we have
\begin{equation}
\P(S(x_j)>(1-\epsilon/2)x_j\ \text{for all j})\geq
\prod_{j=0}^J \P(S(x_j)>(1-\epsilon/2)x_j)\geq \eta^{-J-1}\geq\frac{1}{\eta^2N^{1-c_1}}\geq\frac{1}{N},
\end{equation}
where $\floor{\alpha}=\max\{z\in\Z:\, z\leq\alpha\}$.
Thus the proof of Lemma 3.1 is complete.
\end{proof}
\begin{Lem}\label{density} For every $\epsilon >0$, let
$K=K(\epsilon)$ and $N_0=N_0(\epsilon)$ be the ones defined in
Lemma \ref{sum}. Then there exists a set $B\subseteq\cP$ such that
for every $x>N_0$, we have $$ B(x)\leq
\frac{2K}{c_0c_1(1-c_1)}\log x
$$
and the sumset $S=\cP+B$ satisfies
$$
S(x)\geq (1-\epsilon)x.
$$\end{Lem}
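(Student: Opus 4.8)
The plan is to glue together the ``local'' sets produced by Lemma~\ref{sum} along a sequence of geometrically increasing scales, so that the density ranges they control overlap and cover all large $x$. Fix $\epsilon>0$ and let $K=K(\epsilon)$, $N_0=N_0(\epsilon)$ be as in Lemma~\ref{sum}. Since $0<c_0<c_1<1$, I would define the scales $N_k=N_0^{(1/c_1)^k}$ for $k\geq 1$; these satisfy $N_k>N_0$ and $N_{k+1}^{c_1}=N_k$, so the intervals $[N_k^{c_1},N_k]$ tile $[N_0,\infty)$, consecutive intervals meeting exactly at the endpoints $N_k$.

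For each $k\geq 1$ I apply Lemma~\ref{sum} with $N=N_k$ (legitimate since $N_k>N_0$) to obtain a set $B_k\subset[N_k^{c_0},2N_k^{c_0}]$ of primes with $|B_k|\leq K\log N_k$ such that $(\cP+B_k)(x)\geq(1-\epsilon)x$ for all $N_k^{c_1}\leq x\leq N_k$. Then I set $B=\bigcup_{k\geq1}B_k\subseteq\cP$ and $S=\cP+B$. The density bound propagates by monotonicity: given $x>N_0$, choose the $k$ with $N_k^{c_1}\leq x\leq N_k$; since $B_k\subseteq B$ we have $\cP+B_k\subseteq S$, hence $S(x)\geq(\cP+B_k)(x)\geq(1-\epsilon)x$, which is exactly what is required.

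The quantitative heart of the argument is the counting bound. Because $B_k\subset[N_k^{c_0},2N_k^{c_0}]$, the set $B_k$ can contribute to $B(x)$ only when $N_k^{c_0}\leq x$, i.e. $N_k\leq x^{1/c_0}$; moreover the intervals $[N_k^{c_0},2N_k^{c_0}]$ are pairwise disjoint (as $N_{k+1}^{c_0}=N_k^{c_0/c_1}$ dwarfs $2N_k^{c_0}$ for large $N_k$), so no element is double counted. Writing $K^*$ for the largest index with $N_{K^*}\leq x^{1/c_0}$, I obtain
\[
B(x)\leq\sum_{k=1}^{K^*}|B_k|\leq K\sum_{k=1}^{K^*}\log N_k=K\log N_0\sum_{k=1}^{K^*}\Big(\tfrac1{c_1}\Big)^k.
\]
The crucial point is that this geometric sum has ratio $1/c_1>1$, so it is dominated by its top term: it is at most $\frac{1}{1-c_1}(1/c_1)^{K^*}\log N_0$, and by the definition of $K^*$ one has $(1/c_1)^{K^*}\log N_0\leq\frac1{c_0}\log x$. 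Collecting constants gives $B(x)\leq\frac{K}{c_0(1-c_1)}\log x$, comfortably within the stated bound $\frac{2K}{c_0c_1(1-c_1)}\log x$.

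The construction is straightforward, so the main obstacle to watch is bookkeeping rather than depth. One must make sure the scales are chosen so that the density intervals $[N_k^{c_1},N_k]$ genuinely cover every $x>N_0$ with no gaps --- this is what forces the relation $N_{k+1}^{c_1}=N_k$ --- while keeping the $B_k$ in well-separated intervals so that summing $|B_k|$ gives an honest upper bound for $B(x)$. The only inequality requiring real care is the geometric summation: it is essential that $\log N_k$ grows geometrically, so that the total is comparable to the single largest scale $N_{K^*}\asymp x^{1/c_0}$. This is precisely what converts the per-scale bound $K\log N_k$ into the global $O(\log x)$ estimate.
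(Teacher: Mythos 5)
Your proposal is correct and takes essentially the same route as the paper: both apply Lemma~\ref{sum} at geometrically increasing scales $N_{k+1}\approx N_k^{1/c_1}$ (the paper rounds to integers via $N_{i+1}=[N_i^{1/c_1}]+1$, which is why it carries the slack factor $2/c_1$ in the constant), take $B=\bigcup_k B_k$, get the density bound from the tiling of $[N_0,\infty)$ by the intervals $[N_k^{c_1},N_k]$, and bound $B(x)$ by summing the geometric series $\sum\log N_k$ up to the largest scale with $N_k^{c_0}\leq x$. Your exact choice $N_k=N_0^{(1/c_1)^k}$ even yields the slightly better constant $\frac{K}{c_0(1-c_1)}$, and the disjointness remark is unneeded since the union bound already gives $B(x)\leq\sum_k|B_k\cap[1,x]|$.
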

\begin{proof}
Define
$$ N_{i+1}=[N_i^{1/c_1}]+1$$
for $i\geq 0$. Applying Lemma \ref{sum} to each $N=N_i$, we get a
set
$$B_i\subset [N_i^{c_0},2N_i^{c_0}]$$
with $$|B_i|\leq K\log N_i$$ satisfying that
$$S_i(x)\geq (1-\epsilon)x$$
for any $N_i^{c_1}\leq x\leq N_i$,  where the set $S_i=\cP+B_i$.
We put $$B=\bigcup B_i.$$ Suppose that $x\in[N_i^{c_1},N_i]$. Clearly, for the set $S=\cP+B$, we have
$$S(x)\geq S_i(x)\geq(1-\epsilon)x.$$
Let
$$k=\min\{i:\, N_i^{c_0}>x\text{ and }N_{i-1}^{c_0}\leq x\}.
$$
Note that
$$
\log N_{i-1}\leq c_1\log N_i.
$$
Since $\log N_i$ grows exponentially, we get
$$B(x)\leq \sum_{\substack{0\leq i\leq k}}|B_i|\leq K(\log N_0+\log N_1+\cdots +\log N_k)\leq K\log N_k\sum_{j=0}^kc_1^j\leq\frac{2K\log(x^{1/c_0c_1})}{1-c_1}.$$
\end{proof}
Next we are ready to prove Theorem \ref{t2}.
\begin{proof}[Proof of the Theorem \ref{t2}]

Let
$$
\epsilon_i=\frac1{i+1}.
$$
Let $B_i$ be the set satisfying the requirements of Lemma
\ref{density} for $\epsilon=\epsilon_i$. Let
$N_1=N_0(\epsilon_1)$. Since $w(x)\to\infty$ as $x\to\infty$, for
every $i\geq 2$, let $N_i\geq
\max\{N_0(\epsilon_i),e^{2N_{i-1}/\epsilon_i}\}$  be an integer
such that
$$
\frac{2}{c_0c_1(1-c_1)}\sum_{j=1}^{i+1}K(\epsilon_j)\leq w(x)
$$
for every $x\geq N_i$. Let
$$
\cB=\bigcup_{i=1}^\infty (B_i\cap[N_{i-1},N_{i+1}]).
$$
Note that for any $N_{i}<x\leq N_{i+1}$, we have
\begin{align*}
&|\{n\leq x:\, n=p+b,\ p\in\cP,\ b\in\cB\}|\\
\geq&|\{n\leq x:\, n=p+b,\ p\in\cP,\ b\in B_i\cap[N_{i-1},N_{i+1}]\}|\\
>&|\{n\leq x:\, n=p+b,\ p\in\cP,\ b\in B_i\}|-N_{i-1}\cdot\cP(x)\\
\geq&(1-\epsilon_i)x-N_{i-1}\cdot\frac{2x}{\log x}\geq (1-2\epsilon_i)x.
\end{align*}
So the sumset $\cP+\cB$ has lower density 1. Also, for every
$N_{i}<x\leq N_{i+1}$, we have
\begin{align*}
\cB(x)\leq\sum_{j=1}^{i+1} |B_j\cap[1,x]|
\leq\bigg(\frac{2}{c_0c_1(1-c_1)}\sum_{j=1}^{i+1}K(\epsilon_j)\bigg)\log
x\leq w(x)\log x.
\end{align*}
\end{proof}
\begin{Ack} The second author thanks his colleague, Dr. Rongli Liu, for her helpful discussions on Vu's proof.
\end{Ack}

\end{document}